\theoremstyle{plain}
\numberwithin{equation}{section}
\newtheorem{thm}{Theorem}
\newtheorem{lemma}[thm]{Lemma}
\newtheorem*{Euclid}{Euclid's Theorem}
\newtheorem*{Ostrowski}{Ostrowski's Theorem}
\newtheorem*{app}{Approximation Theorem}
\newtheorem*{Euler}{Euler's Theorem}
\newtheorem*{Szemeredi}{Szemer\'edi's Theorem}
\newtheorem*{Euler--Legendre}{Euler--Legendre's Theorem}
\theoremstyle{definition}
\newtheorem*{definition}{Definition}
\newtheorem*{remark}{Remark}
\begin{document}
	\title{Valuations, arithmetic progressions, and prime numbers}
	\author{Shin-ichiro Seki}
	\address{Department of Mathematics\\
		Graduate School of Science\\
		Osaka University\\
		Toyonaka, Osaka\\
		560-0043 Japan}
	\email{shinchan.prime@gmail.com}
	\thanks{The author is supported by the Grant-in-Aid for JSPS Fellows (JP16J01758), The Ministry of Education, Culture, Sports, Science and Technology, Japan.}
	
	\begin{abstract}
		In this short note, we give two proofs of the infinitude of primes via valuation theory and give a new proof of the divergence of the sum of prime  reciprocals by Roth's theorem and Euler--Legendre's theorem for arithmetic progressions.
	\end{abstract}
	
	\maketitle
	
	Many proofs of the following theorem are known after Euclid \cite[Book IX Proposition 20]{H}:
	\begin{Euclid}
		There are infinitely many prime numbers.
	\end{Euclid}
	The author is interested in which mathematical theorems have potential to imply Euclid's theorem without circular arguments. For example, recently, Alpoge \cite{A} discovered that van der Waerden's theorem for arithmetic progressions in Ramsey theory implies Euclid's theorem. As we see in the first section, the approximation theorem in valuation theory of the field of rational numbers also implies Euclid's theorem. In the second section, we see that the idea using an existence of an arithmetic progression is also applicable to prove the divergence of the sum of prime reciprocals. 
	
	\section{The infinitude of primes via valuation theory}
	We cite Neukirch's book \cite{N} for the facts in valuation theory. 
	\begin{Ostrowski}[{\cite[p.~119, (3.7)]{N}}]
		Every non-trivial valuation on the field of rational numbers is equivalent to either the usual absolute value or the $p$-adic valuation for some prime number $p$.
	\end{Ostrowski}
	By this beautiful theorem, we see that the infinitude of primes is equivalent to the infinitude of equivalence classes of non-trivial valuations on the field of rational numbers. In this section, we give two valuation theoretic proofs of Euclid's theorem by the approximation theorem. Note that we don't use Ostrowski's theorem, but we have to consider the infinite place.
	\begin{app}[{\cite[p.~117, (3.4)]{N}}]
		Let $| \ |_1, \dots, | \ |_n$ be pairwise inequivalent non-trivial valuations of the field of rational numbers and let $a_1, \dots, a_n$ be given rational numbers. Then, for every $\varepsilon > 0$, there exists a rational number $q$ such that
		\[
		|q-a_i|_i < \varepsilon
		\]
		for all $i=1, \dots, n$.
	\end{app}
	Let $| \ |_p$ (resp.\ $| \ |_{\infty}$) be the $p$-adic valuation normalized as $|p|_p=p^{-1}$ (resp.\ the usual absolute value) and $\mathbb{Q}_p$ (resp.\ $\mathbb{Q}_{\infty}$) the field of $p$-adic numbers for a prime number $p$ (resp.\ the field of real numbers). In the following two proofs only, we denote $p$ as a prime number or the symbol $\infty$. 
	
	\begin{proof}[Proof of Euclid's theorem by the product formula]
		We assume that there are only finitely many primes. For each $p$, take a rational number $a_p$ such that $|a_p|_p > 1$. By the approximation theorem, we can take a rational number $q$ such that $|q|_p > 1$ for every $p$. Then, $\prod_p|q|_p > 1$ holds. On the other hand, by the product formula (\cite[p.~108, (2.1)]{N}), $\prod_p|q|_p$ must be equal to $1$. This is a contradiction.
	\end{proof}
	\begin{proof}[Proof of Euclid's theorem by the topology of the adele ring] We assume that there are only fin-
		
		\noindent itely many primes. Then, the adele ring $\mathbb{A}_{\mathbb{Q}}$ (\cite[p.~357]{N}) over the field of rational numbers $\mathbb{Q}$ is just the direct product $\prod_p\mathbb{Q}_p$ and the topology coincides with the product topology. Since $\mathbb{Q}$ is dense in $\mathbb{Q}_p$ for each $p$, for the diagonal embedding, $\mathbb{Q}$ is also dense in $\mathbb{A}_{\mathbb{Q}}$ by the approximation theorem. On the other hand, $\mathbb{Q}$ is discrete in $\mathbb{A}_{\mathbb{Q}}$ because of $\{(-1/2, 1/2) \times \prod_{p \neq \infty}\mathbb{Z}_p\}\cap \mathbb{Q} = \{0\}$. Since every discrete subgroup of a Hausdorff topological group is closed, we have that $\mathbb{Q}$ is equal to $\mathbb{A}_{\mathbb{Q}}$. This is clearly impossible.
	\end{proof}
	Furstenberg \cite{F} found a beautiful topological proof of Euclid's theorem. The above second proof also gives a topological proof. 
	
	\section{The divergence of the sum of prime reciprocals via arithmetic progressions}
	
	Euler gave an analytic proof of Euclid's theorem by using the Euler product formula of the Riemann zeta function and the divergence of the harmonic series. Euler proved not only Euclid's theorem, but also the following stronger fact \cite[Theorema 19]{Eu}: 
	\begin{Euler}
		The sum of prime reciprocals diverges$:$
		\[
		\sum_p\frac{1}{p}=\infty.
		\]
	\end{Euler}
	Erd\H os \cite{E} gave another proof of Euler's theorem by combining his combinatorial counting proof of Euclid's theorem and some additional estimate (Lemma \ref{lem2}).
	
	Most of other proofs of Euclid's theorem seem to have no potential to be extended to a proof of Euler's theorem. However, if we use Erd\H os' estimate to ensure that a certain set has positive upper density and use the celebrated theorem by Szemer\'edi instead of using van der Waerden's theorem in the proof of Euclid's theorem by Alpoge \cite{A}, we can give a new proof of Euler's theorem \emph{without the divergence of the harmonic series or quantitative argument}. We recall Szemer\'edi's theorem:
	\begin{definition}[upper density]
		Let $A$ be a set of positive integers. Then, we define \emph{the upper density} $\overline{d}(A)$ of $A$ by
		\[
		\overline{d}(A) := \limsup_{N \to \infty}\frac{\#(A \cap \{1, 2, \dots, N\})}{N}.
		\]
	\end{definition}
	\begin{Szemeredi}
		Let $A$ be a set of positive integers which has positive upper density. Then, $A$ contains an arithmetic progression of length $k$, for every positive integer $k$.
	\end{Szemeredi}
	The case $k=3$ was proved by Roth \cite{R} in 1953 and the case $k=4$ was proved by Szemer\'edi \cite{S1} in 1969. Finally, the general case was established by Szemer\'edi \cite{S2} in 1975. Note that the proofs become much more complicated as $k$ is larger.
	
	Actually, we need not to use Szemer\'edi's theorem, and Roth's theorem is sufficient to deduce Euler's theorem. After the proof by Alpoge, Granville \cite{G} found another way of deducing Euclid's theorem by van der Waerden's theorem. Although we use an existence of an arithmetic progression of sufficiently large length in Alpoge's method, we use only an arithmetic progression of length four in Granville's proof by Fermat's theorem for squares in an arithmetic progression. Furthermore, it is enough in the length-three case if we replace Fermat's theorem with the following Euler--Legendre's theorem \cite[Vol.\ II.\ 572--573]{D}:
	\begin{Euler--Legendre}
		There are no length-three arithmetic progressions whose terms are cubes of positive integers.
	\end{Euler--Legendre}
	Based on the above observation, we give a new proof of the divergence of the sum of prime reciprocals by Roth's theorem and Euler--Legendre's theorem. This is still \emph{overkill}, but we see the power of Roth's theorem. We need the following two lemmas.
	\begin{lemma}[Pigeonhole principle for upper density]
		Let $A$ be a set of positive integers with $\overline{d}(A) > 0$. If $A$ is partitioned into finitely many classes, then there is at least one class which has positive upper density.
		\label{lem1}\end{lemma}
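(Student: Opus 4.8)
The plan is to reduce everything to the elementary fact that $\limsup$ is subadditive over finitely many bounded sequences. Write the partition as $A = A_1 \sqcup \cdots \sqcup A_r$ with the $A_j$ pairwise disjoint. For each positive integer $N$ the counting functions add exactly:
\[
\#(A \cap \{1, \dots, N\}) = \sum_{j=1}^{r}\#(A_j \cap \{1, \dots, N\}),
\]
since every element of $A$ lies in precisely one class. Dividing by $N$ gives an exact identity between the corresponding density ratios.

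Next I would take $\limsup_{N \to \infty}$ of both sides. The sequences $N \mapsto \#(A_j \cap \{1,\dots,N\})/N$ all take values in $[0,1]$, so there is no $\infty - \infty$ ambiguity, and the standard inequality $\limsup_{N}(x^{(1)}_N + \cdots + x^{(r)}_N) \le \limsup_{N} x^{(1)}_N + \cdots + \limsup_{N} x^{(r)}_N$ applies (this is where finiteness of the partition is used — the inequality can fail for infinitely many classes). Hence
\[
\overline{d}(A) \;\le\; \sum_{j=1}^{r}\overline{d}(A_j).
\]

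Finally I would argue by contradiction: if every class had $\overline{d}(A_j) = 0$, the displayed inequality would force $\overline{d}(A) = 0$, contradicting the hypothesis $\overline{d}(A) > 0$. Therefore $\overline{d}(A_j) > 0$ for at least one index $j$, which is the assertion. I do not expect any genuine obstacle here; the only point requiring a moment's care is the subadditivity of $\limsup$ and the fact that it is an inequality (not an equality), but that is all that the argument needs.
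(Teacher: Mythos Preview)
Your argument is correct and is exactly the unpacking of what the paper means by ``clear by definition'': the counting functions of the classes add up to that of $A$, and subadditivity of $\limsup$ over finitely many bounded sequences yields $\overline{d}(A) \le \sum_j \overline{d}(A_j)$, from which the conclusion follows. The paper's proof is the one-line ``This is clear by definition,'' so your version is the same approach written out in full.
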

	\begin{proof}
		This is clear by definition.
	\end{proof}
	\begin{lemma}
		Let $p_j$ denote the $j$th prime number and $P_r$ be the set of positive integers which do not have $p_{r+1}, p_{r+2}, \dots$ as their prime factors. We assume that the sum of prime reciprocals converges. Then, there exists some positive integer $r$ such that $P_r$ has positive upper density.
		\label{lem2}\end{lemma}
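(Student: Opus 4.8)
The plan is to reproduce Erd\H os' counting argument. Since $\sum_p 1/p$ converges by hypothesis, its tail tends to $0$, so I can fix a positive integer $r$ for which $\sum_{j > r} 1/p_j < 1/2$. With this $r$ fixed, it suffices to show that $P_r$ meets every initial segment $\{1, 2, \dots, N\}$ in more than $N/2$ elements, since then $\overline{d}(P_r) \ge 1/2 > 0$.

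To bound $\#(P_r \cap \{1, \dots, N\})$ from below, I estimate the size of its complement inside $\{1, \dots, N\}$. A positive integer $n \le N$ fails to lie in $P_r$ exactly when it is divisible by at least one of the primes $p_{r+1}, p_{r+2}, \dots$, and in fact only the primes $p_j \le N$ can occur. The number of multiples of $p_j$ in $\{1, \dots, N\}$ equals $\lfloor N/p_j \rfloor \le N/p_j$, so a union bound over these primes gives
\[
\#\bigl(\{1, \dots, N\} \setminus P_r\bigr) \le \sum_{j > r} \left\lfloor \frac{N}{p_j} \right\rfloor \le N \sum_{j > r} \frac{1}{p_j} < \frac{N}{2}.
\]
Hence $\#(P_r \cap \{1, \dots, N\}) > N - N/2 = N/2$ for every $N$, and passing to $\limsup_{N \to \infty}$ of $\#(P_r \cap \{1,\dots,N\})/N$ yields $\overline{d}(P_r) \ge 1/2$, which proves the lemma.

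There is no real obstacle here; the only points needing a moment's care are that the sum $\sum_{j>r} \lfloor N/p_j \rfloor$ is genuinely finite (the terms with $p_j > N$ vanish), so the union bound is legitimate, and that the resulting inequality $\#(P_r \cap \{1,\dots,N\}) > N/2$ holds uniformly in $N$, which is precisely what turns a pointwise count into a lower bound on the upper density. This uniform estimate is exactly Erd\H os' estimate and is the single quantitative ingredient retained in the argument.
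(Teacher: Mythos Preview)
Your proof is correct and follows exactly the same route as the paper's own proof: choose $r$ so that the tail $\sum_{j>r}1/p_j$ is at most (or strictly less than) $1/2$, bound the complement of $P_r$ in $\{1,\dots,N\}$ by the union bound $\sum_{j>r}\lfloor N/p_j\rfloor\le N/2$, and conclude $\overline{d}(P_r)\ge 1/2$. The only cosmetic difference is your use of a strict inequality in the choice of $r$, which is immaterial.
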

	This is a rephrasing of Erd\H os' estimate in \cite{E}. 
	\begin{proof}
		Under the assumption, we take a positive integer $r$ as $\sum_{j > r}\frac{1}{p_j} \leq 1/2$ holds. Since positive integers less than or equal to $N$ which are not contained in $P_r$ are divided by at least one of $p_{r+1}, p_{r+2}, \dots$, we have
		\[
		N-\#(P_r \cap \{1, 2, \dots, N\}) \leq \sum_{j > r}\left\lfloor\frac{N}{p_j}\right\rfloor \leq \sum_{j > r}\frac{N}{p_j} \leq \frac{N}{2}
		\]
		for any positive integer $N$. Thus, we have 
		\[
		\#(P_r \cap \{1, 2, \dots, N\}) \geq \frac{N}{2}	
		\]
		and $\overline{d}(P_r) \geq 1/2$.
	\end{proof}
	\begin{proof}[New proof of Euler's theorem]
		We assume that the sum of prime reciprocals converges. Let $r$ and $P_r$ be as in Lemma \ref{lem2}. For a tuple $v \in \{0, 1, 2\}^r$, we define a subset $P_r^{(v)}$ of $P_r$ by 
		\[
		P_r^{(v)} := \{n \in P_r \mid n=p_1^{e_1}\cdots p_r^{e_r}, \ (e_1, \dots, e_r) \equiv v \pmod{3}\}.
		\]
		Then, by Lemma \ref{lem1} and Lemme \ref{lem2}, there exists a tuple $v \in \{0, 1, 2\}^r$ such that the upper density of $P_r^{(v)}$ is positive. Hence, by Roth's theorem, there are positive integers $A$ and $D$ satisfying $A, A+D, A+2D \in P_r^{(v)}$. Let $R$ be the unique cubefree integer in $P_r^{(v)}$. Then, $A$ and $D$ are divided by $R$ and all $a, a+d, a+2d$ are cubes for $a:=A/R$ and $d:=D/R$. This contradicts to Euler--Legendre's theorem. Therefore, the sum of prime reciprocals diverges.
	\end{proof}
	
	\begin{remark}
		Darmon and Merel \cite{DM} proved that there are no non-trivial length-three arithmetic progressions whose terms are $n$th powers for $n \geq 3$.
	\end{remark}

	\section*{Acknowledgment.}
	The author would like to thank Junnosuke Koizumi for letting the author know Alpoge's work \cite{A}. He also would like to thank Kenji Sakugawa, Masataka Ono, Yuta Suzuki, and Toshiki Matsusaka for their valuable comments.

	\medskip
	
	\noindent MSC2010: 11A41, 11B25
	
\end{document}